\theoremstyle{plain}
\newtheorem{thm}{Theorem}
\newtheorem{lemma}[thm]{Lemma} 
\newtheorem{prop}[thm]{Proposition}
\theoremstyle{remark}
\theoremstyle{definition}
\newcommand\Afr{{\mathfrak A}}
\newcommand\Dc{{\mathcal{D}}}
\newcommand\Exp{{\operatorname{Exp}}}
\newcommand\HEu{{\EuScript H}}
\newcommand\lspan{\mathrm{span}\,}
\newcommand\Mcal{{\mathcal{M}}}
\newcommand\supp{\operatorname{supp}}
\begin{document}

\title[Upper triangular forms]{Holomorphic functional calculus on upper triangular forms in finite von Neumann algebras}

\author[Dykema]{K. Dykema$^*$}
\address{Department of Mathematics, Texas A\&M University, College Station, TX, USA.}
\email{ken.dykema@math.tamu.edu}
\thanks{\footnotesize ${}^{*}$ Research supported in part by NSF grant DMS--1202660.}
\author[Sukochev]{F. Sukochev$^{\S}$}
\address{School of Mathematics and Statistics, University of new South Wales, Kensington, NSW, Australia.}
\email{f.sukochev@math.unsw.edu.au}
\thanks{\footnotesize ${}^{\S}$ Research supported by ARC}
\author[Zanin]{D. Zanin$^{\S}$}
\address{School of Mathematics and Statistics, University of new South Wales, Kensington, NSW, Australia.}
\email{d.zanin@math.unsw.edu.au}

\subjclass[2000]{47C15}


\begin{abstract} 
The decompositions of an element of a finite von Neumann algebra into the sum of a normal operator plus an s.o.t.-quasinilpotent operator, obtained using the Haagerup--Schultz hyperinvariant projections, behave well with respect to holomorphic functional calculus.
\end{abstract}

\maketitle

\section{Introduction and description of results}

This note concerns the decomposition theorem for elements of a finite von Neumann algebra,
recently proved in~\cite{DSZ}.
In that paper, given a von Neumann algebra $\Mcal$ with a normal, faithful, tracial state $\tau$,
by using the hyperinvariant subspaces found by Haagerup and Schultz~\cite{HS09}
and their behavior with respect to Brown measure,
for every element $T\in\Mcal$
we constructed a decomposition $T=N+Q$ where $N\in\Mcal$ is a normal operator whose
Brown measure agrees with that of $T$ and where $Q$ is an s.o.t.-quasinilpotent operator.
An element $Q\in\Mcal$ is said to be s.o.t.-quasinilpotent if
$((Q^*)^nQ^n)^{1/n}$ converges in the strong operator topology to the zero operator
---  by Corollary 2.7 in \cite{HS09}, this is equivalent to  the Brown measure of $Q$ being concentrated at $0$.
In fact, $N$ is obtained as the conditional expectation of $T$ onto the (abelian)
subalgebra generated by an increasing family of Haagerup--Schultz projections.

The Brown measure~\cite{Br86} of an element $T$ of a finite von Neumann algebra is a sort of spectral distribution measure,
whose support is contained in the spectrum $\sigma(T)$ of $T$.
We will use $\nu_T$ to denote the Brown measure of $T$.
The Brown measure behaves well under holomorphic (or Riesz) functional calculus.
Indeed, Brown proved (Theorem~4.1 of~\cite{Br86}) that if $h$ is holomorphic on a neighborhood of the spectrum of $T$,
then $\nu_{h(T)}=\nu_T\circ h^{-1}$ (the push-forward measure by the function $h$).

In this note, we prove the following:
\begin{thm}\label{thm:main}
Let $T$ be an element of a finite von Neumann algebra $\Mcal$ (with fixed normal, faithful tracial state $\tau$)
and let
$T=N+Q$ be a decomposition from~\cite{DSZ}, with $N$ normal, $\nu_N=\nu_T$ and $Q$ s.o.t.-quasinilpotent.
\begin{enumerate}[(i)]
\item
Let $h$ be a complex-valued function that is holomorphic
on a neighborhood of the spectrum of $T$.
Then
\[
h(T)=h(N)+Q_h,
\]
where $Q_h$ is s.o.t.-quasinilpotent.
\item
If $0\notin\supp\nu_T$ (so that $N$ is invertible), then
\[
T=N(I+N^{-1}Q)
\]
and $N^{-1}Q$ is s.o.t.-quasinilpotent.
\end{enumerate}
\end{thm}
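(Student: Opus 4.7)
The plan leverages the fact that each $P_s$ in the chain defining $\Afr=\{P_s\}''$ and $N=E_\Afr(T)$ is hyperinvariant for $T$, so that $P_s\HEu$ is invariant under every operator in $\{T\}'$. Since $(z-T)^{-1}\in\{T\}'$ for $z\notin\sigma(T)$ and, via the Riesz integral, $h(T)\in\{T\}'$ as well, both $(z-T)^{-1}$ and $h(T)$ leave each $P_s\HEu$ invariant. Thus $T$, its resolvent, and $h(T)$ are all ``upper triangular'' with respect to the chain, meaning $P_s^\perp(\,\cdot\,)P_s=0$ for every $s$.

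For part~(i), the first step is to establish the identity $E_\Afr((z-T)^{-1})=(z-N)^{-1}$ for $z\notin\sigma(T)$. Writing $(z-T)^{-1}=D_z+U_z$ with $D_z:=E_\Afr((z-T)^{-1})\in\Afr$ and $U_z$ upper triangular with $E_\Afr(U_z)=0$, and using $(z-T)=(z-N)-Q$, the identity $(z-T)(D_z+U_z)=I$ becomes
\[
(z-N)D_z+(z-N)U_z-QD_z-QU_z=I.
\]
Applying $E_\Afr$ and using its $\Afr$-bimodule property, $E_\Afr((z-N)U_z)=(z-N)E_\Afr(U_z)=0$ and $E_\Afr(QD_z)=E_\Afr(Q)D_z=0$, while $E_\Afr(QU_z)$ vanishes because for the continuous HS chain products of upper-triangular operators lying in $\ker E_\Afr$ remain in $\ker E_\Afr$. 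This yields $(z-N)D_z=I$, so $D_z=(z-N)^{-1}$. Integrating over the Riesz contour gives $E_\Afr(h(T))=h(N)$, whence $Q_h:=h(T)-h(N)$ is upper triangular with $E_\Afr(Q_h)=0$. Applying the arguments of~\cite{DSZ} to $h(T)$ and the filtration $\{P_s\}$ then yields that $Q_h$ is s.o.t.-quasinilpotent.

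For part~(ii), normality of $N$ combined with $\nu_N=\nu_T$ gives $\sigma(N)=\supp\nu_T\not\ni 0$, so $N$ is invertible with $N^{-1}\in\Afr$. Then $N^{-1}Q$ is the product of the diagonal element $N^{-1}\in\Afr$ and the strictly upper-triangular $Q$; it is therefore upper triangular with $E_\Afr(N^{-1}Q)=N^{-1}E_\Afr(Q)=0$, and the same DSZ-type argument as in~(i) delivers s.o.t.-quasinilpotence. The crux of the whole proof is the identity $E_\Afr(QU_z)=0$ --- and, more broadly, the fact that strictly upper-triangular operators with respect to the continuous HS chain are s.o.t.-quasinilpotent. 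This is the step where the absence of atoms in the chain is essential, and where one may need to invoke or mildly extend the main technical apparatus of~\cite{DSZ}; everything else is bookkeeping built on hyperinvariance and the Riesz functional calculus.
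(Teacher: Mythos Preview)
Your approach diverges from the paper's in a way that creates a genuine gap. The paper never works with $E_\Afr=\Exp_\Dc$ (the expectation onto the abelian algebra $\Dc$ generated by the chain); it works throughout with $\Exp_{\Dc'}$, the expectation onto the \emph{relative commutant}. The reason is that Lemma~22 of \cite{DSZ} (restated here as Lemma~\ref{lem:22}) asserts that the Brown measures of an upper-triangular $X$ and of $\Exp_{\Dc'}(X)$ agree --- it says nothing about $\Exp_\Dc(X)$. Thus even if you succeed in proving $E_\Afr(Q_h)=0$, this alone does not force $\nu_{Q_h}=\delta_0$: you would still need $\Exp_{\Dc'}(Q_h)$ to be s.o.t.-quasinilpotent, and $\Exp_\Dc(\Exp_{\Dc'}(Q_h))=0$ does not imply that. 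Concretely, if $\nu_T$ is a single point mass the DSZ chain degenerates to $\{0,I\}$, so $\Dc=\Cpx I$, $\Dc'=\Mcal$, and your principle would read ``$\tau(X)=0\Rightarrow X$ s.o.t.-quasinilpotent,'' which is false. So ``applying the arguments of \cite{DSZ} to $h(T)$'' is not available in the form you invoke: those arguments prove s.o.t.-quasinilpotence of $\Exp_{\Dc'}(Q)$ using the specific Haagerup--Schultz structure of the $p_t$ relative to $T$, not a general statement that strictly upper-triangular (zero-$E_\Dc$) operators are s.o.t.-quasinilpotent. The same objection applies to your argument for part~(ii).

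The paper closes this gap by pushing everything into $\Dc'$, where commutativity is available. It proves $\Exp_{\Dc'}(h(T))=h(\Exp_{\Dc'}(T))$ (Lemma~\ref{lem:hTexp}, via Lemma~\ref{lem:Einv}), so that after applying $\Exp_{\Dc'}$ one is comparing $h(N+\Exp_{\Dc'}(Q))$ with $h(N)$, where $N$ and $\Exp_{\Dc'}(Q)$ \emph{commute} and the latter is already known to be s.o.t.-quasinilpotent. A direct Hilbert-space lemma in the commuting case (Proposition~\ref{prop:hTNcomm}, built on the resolvent identity and Lemma~\ref{lem:AQ}) then shows the difference is s.o.t.-quasinilpotent, and Lemma~\ref{lem:22} transfers this back to $h(T)-h(N)$. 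Part~(ii) is handled identically via $\Exp_{\Dc'}(N^{-1}Q)=N^{-1}\Exp_{\Dc'}(Q)$. Your ideal-type claim $E_\Afr(QU_z)=0$ is also delicate --- at an atom of the chain the corner contribution to $E_\Dc$ is a normalized trace, and trace-zero operators do not form a multiplicative ideal --- and the paper's route via $\Exp_{\Dc'}$ avoids needing any such statement.
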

The key result for the proof is Lemma~22 of~\cite{DSZ}, which allows us to reduce to the case when $N$ and $Q$ commute.
Before using this, we require a few easy results about s.o.t.-quasinilpotent operators on Hilbert space.

\begin{lemma}\label{lem:invs}
Let $\Afr$ be a unital algebra and let $N,Q\in\Afr$, $T=N+Q$ and suppose that both $N$ and $T$ are invertible.
Then
\[
T^{-1}=N^{-1}-T^{-1}QN^{-1}.
\]
\end{lemma}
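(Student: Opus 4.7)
The statement is a purely algebraic identity in a unital algebra, so there are really two natural routes and each is essentially a one-line verification. My plan would be to present the shorter of the two.

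The approach I would take is to simply verify that the claimed expression is indeed a right (equivalently, left) inverse representation. Starting from $T = N + Q$, I would write $Q = T - N$ and substitute into the right-hand side of the proposed identity:
\[
N^{-1} - T^{-1} Q N^{-1} = N^{-1} - T^{-1}(T - N) N^{-1} = N^{-1} - N^{-1} + T^{-1} = T^{-1}.
\]
This uses only distributivity in $\Afr$ together with the invertibility of $N$ and $T$, both hypothesized.

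Equivalently, one could multiply the candidate $N^{-1} - T^{-1} Q N^{-1}$ on the left by $T$ and compute
\[
T N^{-1} - Q N^{-1} = (T - Q) N^{-1} = N N^{-1} = I,
\]
which shows it is the (unique two-sided) inverse of $T$. I would probably choose this second form in the write-up since it is visibly symmetric and makes clear that no hidden cancellation in an infinite algebra is involved.

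There is no real obstacle: the lemma is a formal identity with no analytic content, included only because the formula will be applied later with $T = h(T_0)$ and $N = h(N_0)$ to control the resulting quasinilpotent remainder. The only care needed is to ensure that the identity is stated and proved in a general unital algebra, so that it can be invoked without any von Neumann algebraic hypothesis.
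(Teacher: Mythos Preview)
Your proof is correct and is essentially the same as the paper's: the paper writes $T^{-1}-N^{-1}=T^{-1}(N-T)N^{-1}=-T^{-1}QN^{-1}$, which is exactly your first computation read backwards. No gaps.
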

\begin{proof}
We have
\[
T^{-1}-N^{-1}=T^{-1}(N-T)N^{-1}=-T^{-1}QN^{-1}.
\]
\end{proof}

\begin{lemma}\label{lem:AQ}
Let $A$ and $Q$ be bounded operators on a Hilbert space $\HEu$ such that $AQ=QA$
and suppose $Q$ is s.o.t.-quasinilpotent.
Then $AQ$ is s.o.t.-quasinilpotent.
\end{lemma}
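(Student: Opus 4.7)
\emph{Plan.} The plan is to use commutativity to reduce $((AQ)^*)^n (AQ)^n$ to an expression involving $(Q^*)^n Q^n$, then take $n$-th roots via Löwner-Heinz and conclude by an SOT sandwich.

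First I would use $AQ = QA$ to write $(AQ)^n = A^n Q^n$, whence
\[
((AQ)^*)^n (AQ)^n = (Q^*)^n (A^*)^n A^n Q^n \leq \|A\|^{2n} (Q^*)^n Q^n,
\]
the inequality coming from $(A^*)^n A^n \leq \|A\|^{2n} I$. Applying the operator monotone function $t \mapsto t^{1/n}$ to both sides (Löwner-Heinz) yields
\[
0 \leq \bigl(((AQ)^*)^n (AQ)^n\bigr)^{1/n} \leq \|A\|^2 \bigl((Q^*)^n Q^n\bigr)^{1/n}.
\]

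By hypothesis the right-hand side converges to $0$ in SOT. I would finish by the standard sandwich principle: if $0 \leq C_n \leq B_n$ are positive operators with $\|C_n\|$ uniformly bounded and $B_n \to 0$ in SOT, then $\langle C_n x, x\rangle \leq \langle B_n x, x\rangle \to 0$, and $\|C_n x\|^2 \leq \|C_n\|\,\langle C_n x, x\rangle$ forces $C_n x \to 0$. The required uniform norm bound $\|(((AQ)^*)^n (AQ)^n)^{1/n}\| = \|(AQ)^n\|^{2/n} \leq \|AQ\|^2$ is immediate.

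The only delicate point is this last sandwich step — one needs the uniform norm bound to upgrade the weak form convergence $\langle C_n x, x\rangle \to 0$ to genuine SOT convergence $C_n x \to 0$; apart from that, the argument is entirely routine, since commutativity makes the $n$-th power factor and the operator norm of $A^n$ is trivially controlled by $\|A\|^n$.
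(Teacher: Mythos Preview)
Your proof is correct and follows essentially the same approach as the paper: factor $(AQ)^n=A^nQ^n$ via commutativity, bound $((AQ)^*)^n(AQ)^n$ by $\|A\|^{2n}(Q^*)^nQ^n$, and pass to $n$-th roots via operator monotonicity. The only cosmetic difference is that the paper applies the operator monotone function $t\mapsto t^{2/n}$ (for $n\ge 2$) rather than $t^{1/n}$, which gives $\|C_n\xi\|^2=\langle C_n^2\xi,\xi\rangle\le\|A\|^4\langle B_n^2\xi,\xi\rangle=\|A\|^4\|B_n\xi\|^2$ directly (with $C_n=(((AQ)^*)^n(AQ)^n)^{1/n}$ and $B_n=((Q^*)^nQ^n)^{1/n}$), thereby bypassing your separate sandwich step with the uniform norm bound.
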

\begin{proof}
We have $(AQ)^n=A^nQ^n$ and
\[
((AQ)^*)^n(AQ)^n=(Q^*)^n(A^*)^nA^nQ^n\le\|A\|^{2n}(Q^*)^nQ^n.
\]
By Loewner's Theorem, for $n\ge2$ the function $t\mapsto t^{2/n}$ is operator monotone and we have
\[
\big(((AQ)^*)^n(AQ)^n\big)^{2/n}\le\|A\|^4\big((Q^*)^nQ^n\big)^{2/n}.
\]
Thus, for $\xi\in\HEu$, we have
\begin{multline*}
\|\big(((AQ)^*)^n(AQ)^n\big)^{1/n}\xi\|^2
=\langle \big(((AQ)^*)^n(AQ)^n\big)^{2/n}\xi,\xi\rangle \\
\le\|A\|^4\langle\big((Q^*)^nQ^n\big)^{2/n}\xi,\xi\rangle
=\|A\|^4\|\big((Q^*)^nQ^n\big)^{1/n}\xi\|^2.
\end{multline*}
Since $Q$ is s.o.t.-quasinilpotent, this tends to zero as $n\to\infty$.
\end{proof}

\begin{prop}\label{prop:hTNcomm}
Let $N$ and $Q$ be bounded operators on a Hilbert space and suppose $NQ=QN$ and $Q$ is s.o.t.-quasinilpotent.
Let $T=N+Q$.
Let $h$ be a function that is holomorphic on a neighborhood of
the union $\sigma(T)\cup\sigma(N)$ of the spectra of $T$ and $N$.
Then $h(T)$ and $h(N)$ commute, and $h(T)-h(N)$ is s.o.t.-quasinilpotent.
\end{prop}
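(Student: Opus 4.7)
The plan is to realize $h(T)$ and $h(N)$ as contour integrals in the Riesz holomorphic functional calculus and then to combine the resolvent identity supplied by Lemma~\ref{lem:invs} with the absorption property from Lemma~\ref{lem:AQ}.

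First I would observe that since $NQ = QN$, the operator $T = N + Q$ commutes with both $N$ and $Q$; consequently $N$, $Q$, $T$, and all of their resolvents mutually commute on their common resolvent sets. I would then fix a single contour $\Gamma$ in the domain of holomorphy of $h$ that encircles both $\sigma(T)$ and $\sigma(N)$, which is possible because $h$ is holomorphic on a neighborhood of $\sigma(T) \cup \sigma(N)$, and represent both $h(T)$ and $h(N)$ by contour integrals along $\Gamma$. The commutation $h(T)h(N) = h(N)h(T)$ then follows because $N$ commutes with each $(z-T)^{-1}$, hence with $h(T)$; so $(z-N)^{-1}$ also commutes with $h(T)$, and therefore so does $h(N)$.

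For the s.o.t.-quasinilpotence of $h(T) - h(N)$, I would apply Lemma~\ref{lem:invs} to the decomposition $z - T = (z-N) + (-Q)$ to obtain, for $z \in \Gamma$,
\[
(z-T)^{-1} - (z-N)^{-1} = (z-T)^{-1} Q (z-N)^{-1} = Q(z-T)^{-1}(z-N)^{-1},
\]
the second equality by commutativity. Integrating against $h(z)/(2\pi i)$ along $\Gamma$ yields $h(T) - h(N) = Q A$, where
\[
A = \frac{1}{2\pi i}\oint_\Gamma h(z)(z-T)^{-1}(z-N)^{-1}\,dz,
\]
and $A$ commutes with $Q$ because the integrand does. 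Lemma~\ref{lem:AQ} then gives that $QA$ is s.o.t.-quasinilpotent.

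I do not foresee a substantive obstacle. The crux of the proof is the resolvent identity in the form $(z-T)^{-1} - (z-N)^{-1} = Q(z-T)^{-1}(z-N)^{-1}$, which isolates a single factor of $Q$ and thereby transfers s.o.t.-quasinilpotence from $Q$ to the full difference once we integrate. Everything else is standard holomorphic functional calculus, with the only minor verification being that a single contour can be chosen to surround both spectra, which is immediate from the hypotheses on $h$.
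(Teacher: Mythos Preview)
Your proposal is correct and follows essentially the same argument as the paper: the paper likewise chooses a single contour around $\sigma(T)\cup\sigma(N)$, uses Lemma~\ref{lem:invs} to write $(\lambda-T)^{-1}-(\lambda-N)^{-1}=(\lambda-T)^{-1}Q(\lambda-N)^{-1}$, integrates to express $h(T)-h(N)$ as $AQ$ with $A$ commuting with $Q$, and concludes via Lemma~\ref{lem:AQ}. The only cosmetic difference is that the paper writes the product as $AQ$ rather than $QA$, which is immaterial since they commute.
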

\begin{proof}.
If $\lambda$ is outside of $\sigma(T)\cup\sigma(N)$, then by Lemma~\ref{lem:invs},
\begin{equation}\label{eq:TNlam}
(T-\lambda)^{-1}=(N-\lambda)^{-1}-(T-\lambda)^{-1}Q(N-\lambda)^{-1}.
\end{equation}
Let $C$ be a contour in the domain of the complement $\sigma(T)\cup\sigma(N)$, with winding number $1$
around each point in $\sigma(T)\cup\sigma(N)$.
Then
\begin{align*}
h(T)&=\frac1{2\pi i}\int_C h(\lambda)(\lambda-T)^{-1}\,d\lambda \\
h(N)&=\frac1{2\pi i}\int_C h(\lambda)(\lambda-N)^{-1}\,d\lambda.
\end{align*}
For any complex numbers $\lambda_1$ and $\lambda_2$ outside of $\sigma(T)\cup\sigma(N)$, the operators $(\lambda_1-T)^{-1}$, $(\lambda_2-N)^{-1}$ and $Q$
commute;
thus, $h(T)$ and $h(N)$ commute with each other.
Using~\eqref{eq:TNlam}, we have
\[
h(T)-h(N)=\frac1{2\pi i}\int_C h(\lambda)(\lambda-T)^{-1}Q(\lambda-N)^{-1}\,d\lambda=AQ,
\]
where
\[
A=\frac1{2\pi i}\int_C h(\lambda)(\lambda-T)^{-1}(\lambda-N)^{-1}\,d\lambda.
\]
We have $AQ=QA$.
By Lemma~\ref{lem:AQ},
$AQ$ is s.o.t.-quasinilpotent.
\end{proof}

For the remainder of this note, $\Mcal$ will be a finite von Neumann algebra
with specified normal, faithful, tracial state $\tau$.

\begin{lemma}\label{lem:hTinv}
Let $T\in\Mcal$.
Suppose $p\in\Mcal$ is a $T$--invariant projection with $p\notin\{0,1\}$.
\begin{enumerate}[(i)]
\item
If $T$ is invertible, then $p$ is $T^{-1}$--invariant.  Moreover, we have
\begin{align*}
T^{-1}p&=(pTp)^{-1}, \\
(1-p)T^{-1}&=((1-p)T(1-p))^{-1},
\end{align*}
where the inverses on the right-hand-sides are in $p\Mcal p$ and $(1-p)\Mcal(1-p)$, respectively.
\item
The union of the spectra of $pTp$ and $(1-p)T(1-p)$ (in $p\Mcal p$ and $(1-p)\Mcal(1-p)$, respectively)
equals the spectrum of $T$.
\item
If $h$ is a function that is holomorphic
on a neighborhood of $\sigma(T)$, then $p$ is $h(T)$--invariant.
Moreover, $h(T)p=h(pTp)$.
\end{enumerate}
\end{lemma}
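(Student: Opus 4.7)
I will prove the three items in order; (ii) and (iii) will follow from (i) by spectral and Riesz-integral routines once the block structure of $T$ is in hand. The $T$-invariance of $p$ gives $(1-p)Tp=0$, so
\[
T=A+B+C,\quad A=pTp,\quad B=pT(1-p),\quad C=(1-p)T(1-p),
\]
with $A\in p\Mcal p$ and $C\in(1-p)\Mcal(1-p)$.

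For (i), the crux is to show that $A$ and $C$ are invertible in their respective corner algebras. Since $T$ is invertible it is bounded below on $\HEu$, and $T$-invariance of $p$ yields $Ax=Tx$ for $x\in p\HEu$, so $A$ is bounded below on $p\HEu$; equivalently $\ker A=0$ and $A^{*}A$ is invertible in $p\Mcal p$. Because $p\Mcal p$ is a finite von Neumann algebra, Murray--von Neumann equivalence of the two support projections of $A$ together with faithfulness of the trace promotes $\ker A=0$ to $\ker A^{*}=0$, hence to dense (and already closed) range, so $A$ is invertible in $p\Mcal p$. The same reasoning applied to $T^{*}$ and to the $T^{*}$-invariant projection $1-p$ (the adjoint of $(1-p)Tp=0$) gives $C^{*}$, hence $C$, invertible in $(1-p)\Mcal(1-p)$. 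Setting $S:=A^{-1}$, $R:=C^{-1}$, the relations $S=pSp$, $R=(1-p)R(1-p)$, $Tp=pTp$ and $(1-p)T=(1-p)T(1-p)$ give $TS=p$ and $RT=1-p$ by direct multiplication; these rearrange to the stated identities, and the first shows in particular that $p$ is $T^{-1}$-invariant.

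For (ii), applying (i) to $T-\lambda$ (which also has $p$ as an invariant projection) yields that invertibility of $T-\lambda$ in $\Mcal$ forces invertibility of the corresponding corner compressions. Conversely, given inverses $A'$ of $A-\lambda p$ in $p\Mcal p$ and $C'$ of $C-\lambda(1-p)$ in $(1-p)\Mcal(1-p)$, direct block multiplication shows that $A'-A'BC'+C'$ is a two-sided inverse of $T-\lambda$ in $\Mcal$. For (iii), by (ii) any neighborhood of $\sigma(T)$ on which $h$ is holomorphic contains the spectrum of $pTp$ in $p\Mcal p$, so $h(pTp)\in p\Mcal p$ is well-defined. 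Multiplying the contour-integral formula for $h(T)$ on the right by $p$ and substituting the identity $(\lambda-T)^{-1}p=(\lambda p-pTp)^{-1}$ from (i) produces
\[
h(T)p=\frac{1}{2\pi i}\int_{C_{0}}h(\lambda)(\lambda p-pTp)^{-1}\,d\lambda=h(pTp)
\]
in $p\Mcal p$, giving simultaneously the formula in (iii) and the $h(T)$-invariance of $p$.

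The main obstacle is the invertibility step in (i): $T$-invariance easily forces $A$ to be bounded below on $p\HEu$, but in a general von Neumann algebra bounded-belowness does not imply invertibility --- the bilateral shift on $\ell^{2}(\mathbf{Z})$ compressed to $\ell^{2}(\mathbf{N})$ is the non-invertible unilateral shift, the ambient algebra $B(\ell^{2}(\mathbf{Z}))$ being far from finite. Finiteness of $\Mcal$ rescues us through the equivalence of left and right support projections, which is exactly what turns injectivity of $A$ into invertibility in $p\Mcal p$.
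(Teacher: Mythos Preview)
Your proof is correct and follows essentially the same route as the paper's. The only presentational difference is in part~(i): the paper writes $T^{-1}=\left(\begin{smallmatrix}x&y\\w&z\end{smallmatrix}\right)$ and reads off one-sided inverses of $a$ and $c$ directly from the matrix equations $T^{-1}T=TT^{-1}=1$, then invokes the finiteness fact that one-sided invertibles in $\Mcal$ are invertible; you instead reach left-invertibility of $A$ via the bounded-below argument and handle $C$ by passing to $T^{*}$, which is the same finiteness principle in analytic dress. Parts~(ii) and~(iii) match the paper's argument almost verbatim.
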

\begin{proof}
For~(i), a key fact is that one-sided invertible elements of $\Mcal$ are always invertible.
Thus, writing $T=\left(\begin{smallmatrix} a&b\\0&c\end{smallmatrix}\right)$ with respect to the projections
$p$ and $(1-p)$ (so that $a=pTp$, $b=pT(1-p)$ and $c=(1-p)T(1-p)$)
writing $T^{-1}=\left(\begin{smallmatrix} x&y\\w&z\end{smallmatrix}\right)$ and multiplying,
we easily see that $a$ and $c$ must be invertible
and
\begin{equation}\label{eq:Tinv}
T^{-1}=\left(\begin{matrix}a^{-1}&-a^{-1}bc^{-1}\\0&c^{-1}\end{matrix}\right).
\end{equation}
Thus, $p$ is $T^{-1}$--invariant.

For~(ii) we use~(i) and the fact that the formula~\eqref{eq:Tinv} shows that
$T$ is invertible whenever $pTp$ and $(1-p)T(1-p)$ are invertible.

For~(iii), writing
\begin{equation}\label{eq:hT}
h(T)=\frac1{2\pi i}\int_Ch(\lambda)(\lambda-T)^{-1}\,d\lambda
\end{equation}
for a suitable contour $C$, where this is a Riemann integral that converges in norm,
the result follows by applying part~(i).
\end{proof}

For a von Neumann subalgebra $\Dc$ of $\Mcal$, let
$\Exp_\Dc$ and
 $\Exp_{\Dc'}$, respectively denote the $\tau$-preserving conditional expectations
onto $\Dc$ and, respectively, the relative commutant of $\Dc$ in $\Mcal$.

\begin{lemma}\label{lem:Einv}
Let $T\in\Mcal$.
\begin{enumerate}[(i)]
\item
Suppose $0=p_0\le p_1\le\cdots\le p_n=1$ are $T$-invariant projections and let $\Dc=\lspan\{p_1,\ldots,p_n\}$.
Then the spectra of $T$ and of $\Exp_{\Dc'}(T)$ agree.
If $T$ is invertible, then $\Exp_{\Dc'}(T^{-1})=\Exp_{\Dc'}(T)^{-1}$.
\item
Suppose $(p_t)_{0\le t\le 1}$ is an increasing family of $T$-invariant projections in $\Mcal$ with $p_0=0$
and $p_1=1$, that is right-continuous with respect to strong operator topology.
Let $\Dc$ be the von Neumann algebra generated by the set of all $p_t$.
If $T$ is invertible, then so is $\Exp_{\Dc'}(T)$ and $\Exp_{\Dc'}(T^{-1})=\Exp_{\Dc'}(T)^{-1}$.
\end{enumerate}
\end{lemma}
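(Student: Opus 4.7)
For part~(i), I would exploit the block structure provided by the chain. Set $q_i = p_i - p_{i-1}$; these are mutually orthogonal projections summing to $1$, so $\Dc = \lspan\{q_1, \ldots, q_n\}$ is abelian, $\Dc' = \bigoplus_{i=1}^n q_i \Mcal q_i$, and $\Exp_{\Dc'}(T) = \sum_{i=1}^n q_i T q_i$. The $T$-invariance of the chain translates into $q_j T q_i = 0$ for $j > i$, so $T$ is upper triangular with respect to this decomposition. Iterating Lemma~\ref{lem:hTinv}(ii) along the chain then gives $\sigma(T) = \bigcup_{i=1}^n \sigma_{q_i \Mcal q_i}(q_i T q_i)$, which is manifestly the spectrum of the block-diagonal operator $\Exp_{\Dc'}(T)$. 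For the inverse identity, I would apply Lemma~\ref{lem:hTinv}(i) twice: first to $T$ in $\Mcal$ with invariant projection $p_i$ to obtain $p_i T^{-1} p_i = (p_i T p_i)^{-1}$ in $p_i\Mcal p_i$, and then inside $p_i\Mcal p_i$ to $p_i T p_i$ with invariant projection $p_{i-1}$ to obtain $q_i (p_i T p_i)^{-1} q_i = (q_i T q_i)^{-1}$ in $q_i \Mcal q_i$. Combining the two gives $q_i T^{-1} q_i = (q_i T q_i)^{-1}$, and summing over $i$ yields $\Exp_{\Dc'}(T^{-1}) = \Exp_{\Dc'}(T)^{-1}$.

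For part~(ii), I would reduce to part~(i) by a martingale approximation. Let $\Dc_n = \lspan\{p_{k/2^n} : 0 \le k \le 2^n\}$; this is an increasing sequence of finite-dimensional subalgebras, and right-continuity of $t \mapsto p_t$ ensures each $p_t$ lies in the strong closure of $\bigcup_n \Dc_n$, so $\Dc = (\bigcup_n \Dc_n)''$ and consequently $\Dc' = \bigcap_n \Dc_n'$ is a decreasing intersection. By the reverse martingale convergence theorem in $L^2(\Mcal,\tau)$ (which for $L^2$ is simply convergence of orthogonal projections onto a decreasing sequence of closed subspaces), $\Exp_{\Dc_n'}(X) \to \Exp_{\Dc'}(X)$ in $L^2$, and hence in the strong operator topology since the sequence is uniformly norm-bounded, for every $X \in \Mcal$. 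Applying part~(i) at each finite stage yields $\Exp_{\Dc_n'}(T) \cdot \Exp_{\Dc_n'}(T^{-1}) = I = \Exp_{\Dc_n'}(T^{-1}) \cdot \Exp_{\Dc_n'}(T)$, with the two factors uniformly bounded in norm by $\|T\|$ and $\|T^{-1}\|$ respectively. Joint strong-operator continuity of multiplication on norm-bounded sets then permits me to pass to the limit, giving $\Exp_{\Dc'}(T) \cdot \Exp_{\Dc'}(T^{-1}) = I$ and the reverse identity, so $\Exp_{\Dc'}(T)$ is invertible with inverse $\Exp_{\Dc'}(T^{-1})$.

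The main obstacle is the limiting step in part~(ii): one needs both reverse martingale convergence for the decreasing sequence $\Dc_n'$ and the uniform norm bound $\|\Exp_{\Dc_n'}(T^{-1})\| \le \|T^{-1}\|$ coming from the contraction property of $\Exp$, in order to ensure the finite-stage identity survives the passage to the strong-operator limit. Everything else is bookkeeping with the upper triangular block decomposition and a straightforward double application of Lemma~\ref{lem:hTinv}.
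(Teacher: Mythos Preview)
Your proof is correct and follows essentially the same approach as the paper's. The paper's proof of~(i) simply writes $\Exp_{\Dc'}(T)=\sum_j(p_j-p_{j-1})T(p_j-p_{j-1})$ and says the assertions follow by ``repeated application of Lemma~\ref{lem:hTinv}'', which is exactly what you spell out; for~(ii) the paper likewise chooses finite-dimensional $\Dc_n$ with SOT-dense union, asserts the bounded SOT convergence of $\Exp_{\Dc_n'}$ to $\Exp_{\Dc'}$, and passes the identity $\Exp_{\Dc_n'}(T)\Exp_{\Dc_n'}(T^{-1})=I$ to the limit --- your dyadic choice and your explicit appeal to reverse martingale convergence and the contractivity of conditional expectations just fill in the details the paper leaves implicit.
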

\begin{proof}
For~(i), we have
\[
\Exp_{\Dc'}(T)=\sum_{j=1}^n(p_j-p_{j-1})T(p_j-p_{j-1}).
\]
The assertions now follow from repeated application of Lemma~\ref{lem:hTinv}.

For~(ii), using the right-continuity of $p_t$ it is easy to choose an increasing family of finite dimensional subalgebras
$\Dc_n$ of $\Dc$ whose union is strong operator topology dense in $\Dc$.
Then $\Exp_{\Dc_n'}(T)$ and $\Exp_{\Dc_n'}(T^{-1})$ converge in strong operator topology to
$\Exp_{\Dc'}(T)$ and $\Exp_{\Dc'}(T^{-1})$, respectively, and both sequences are bounded.
From~(i) we have the equality
\[
\Exp_{\Dc_n'}(T)\Exp_{\Dc_n'}(T^{-1})=I,
\]
and taking the limit as $n\to\infty$
yields the desired result.
\end{proof}

\begin{lemma}\label{lem:hTexp}
Let $T\in\Mcal$ and let $p_t$ and $\Dc$ be as in either part~(i) or part~(ii) of  Lemma~\ref{lem:Einv}.
Suppose a function $h$ is holomorphic on a neighborhood of the spectrum of $T$.
Then $\Exp_{\Dc'}(h(T))=h(\Exp_{\Dc'}(T))$.
\end{lemma}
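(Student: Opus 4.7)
The plan is to reduce the statement to Lemma~\ref{lem:Einv} via the Riesz integral formula. Choose a contour $C$ in the domain of holomorphy of $h$ whose interior contains $\sigma(T)$, so that
\[
h(T)=\frac1{2\pi i}\int_C h(\lambda)(\lambda-T)^{-1}\,d\lambda,
\]
with the integral converging in norm. Since the projections $p_t$ are $T$--invariant if and only if they are $(T-\lambda)$--invariant, Lemma~\ref{lem:Einv} applies to $T-\lambda$ for every $\lambda\notin\sigma(T)$; this gives that $\Exp_{\Dc'}(T)-\lambda$ is invertible with
\[
\Exp_{\Dc'}\bigl((\lambda-T)^{-1}\bigr)=(\lambda-\Exp_{\Dc'}(T))^{-1}.
\]
In particular, $\sigma(\Exp_{\Dc'}(T))\subseteq\sigma(T)$, so the same contour $C$ may be used to compute $h(\Exp_{\Dc'}(T))$ via the Riesz functional calculus.

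Now $\Exp_{\Dc'}$ is a bounded (in fact, contractive) linear map, so it commutes with the norm-convergent Riemann integral. Applying it termwise yields
\[
\Exp_{\Dc'}(h(T))=\frac1{2\pi i}\int_C h(\lambda)\,\Exp_{\Dc'}\bigl((\lambda-T)^{-1}\bigr)\,d\lambda
=\frac1{2\pi i}\int_C h(\lambda)(\lambda-\Exp_{\Dc'}(T))^{-1}\,d\lambda,
\]
and the right-hand side is exactly $h(\Exp_{\Dc'}(T))$.

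The only real work is checking the two ingredients in the integrand: that $\Exp_{\Dc'}$ preserves the resolvent at each $\lambda\notin\sigma(T)$, and that the spectral inclusion holds uniformly along $C$. Both of these are direct consequences of Lemma~\ref{lem:Einv} applied to $T-\lambda$ (in case~(i) this is the preservation of invertibility built into the finite sum formula; in case~(ii) it is the strong operator limit argument). The interchange of $\Exp_{\Dc'}$ with the integral is justified by norm-continuity of the integrand $\lambda\mapsto h(\lambda)(\lambda-T)^{-1}$ on the compact contour $C$ together with the fact that $\Exp_{\Dc'}$ is bounded; no delicate measure-theoretic point arises.
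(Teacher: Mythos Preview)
Your proof is correct and follows essentially the same route as the paper: write $h(T)$ as a norm-convergent Riesz integral, use the norm-continuity of $\Exp_{\Dc'}$ to pass it under the integral, and invoke Lemma~\ref{lem:Einv} (applied to $T-\lambda$) to identify $\Exp_{\Dc'}\bigl((\lambda-T)^{-1}\bigr)$ with $(\lambda-\Exp_{\Dc'}(T))^{-1}$. Your version is slightly more explicit in noting the spectral inclusion $\sigma(\Exp_{\Dc'}(T))\subseteq\sigma(T)$ needed to justify using the same contour for $h(\Exp_{\Dc'}(T))$, but otherwise the argument is the same.
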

\begin{proof}
Using that the Riemann integral~\eqref{eq:hT} converges in norm, that $\Exp_{\Dc'}$ is norm continuous
and applying Lemma~\ref{lem:Einv}, we get
\begin{multline*}
\Exp_{\Dc'}(h(T))=\frac1{2\pi i}\int_Ch(\lambda)\Exp_{\Dc'}\big((\lambda-T)^{-1}\big)\,d\lambda \\
=\frac1{2\pi i}\int_Ch(\lambda)(\lambda-\Exp_{\Dc'}(T))^{-1}\,d\lambda=h(\Exp_{\Dc'}(T)).
\end{multline*}
\end{proof}

For convenience, here is the statement of Lemma~22 of~\cite{DSZ}
and an immediate consequence.
\begin{lemma}\label{lem:22}
Let $T\in\Mcal$.
For any increasing, right-continuous family of $T$-invariant projections $(q_t)_{0\le t\le 1}$ with $q_0=0$ and $q_1=1$,
letting $\Dc$ be the von Neumann algebra generated by the set of all the $q_t$,
the Fuglede--Kadison determinants of $T$ and $\Exp_{\Dc'}(T)$ agree.
Since the same is true for $T-\lambda$ and $\Exp_{\Dc'}(T)-\lambda$ 
for all complex numbers $\lambda$,
we have that the Brown measures of $T$ and $\Exp_{\Dc'}(T)$ agree.
\end{lemma}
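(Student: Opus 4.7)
The first assertion—equality of the Fuglede–Kadison determinants of $T$ and $\Exp_{\Dc'}(T)$—is the content of Lemma~22 of~\cite{DSZ}, which the paper explicitly takes as a quoted input. What remains is to deduce the statement about Brown measures from this, together with the parenthetical extension to $T-\lambda$.

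The plan is to apply the quoted lemma with $T$ replaced by $T-\lambda$ for arbitrary $\lambda\in\Cpx$. Two elementary observations are required: any $T$-invariant projection is automatically $(T-\lambda)$-invariant, since $\lambda I$ is central; and $\Exp_{\Dc'}(T-\lambda)=\Exp_{\Dc'}(T)-\lambda$, because $\lambda I\in\Dc\subseteq\Dc'$ is fixed by $\Exp_{\Dc'}$. Feeding $T-\lambda$ into the quoted lemma therefore yields
\[
\Delta(T-\lambda)=\Delta(\Exp_{\Dc'}(T)-\lambda)\qquad(\lambda\in\Cpx),
\]
where $\Delta$ is the Fuglede–Kadison determinant.

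To pass from this identity to Brown measures I would invoke Brown's construction from~\cite{Br86}: the Brown measure $\nu_T$ of an element $T\in\Mcal$ is, up to the factor $\frac{1}{2\pi}$, the distributional Laplacian in the variable $\lambda$ of the subharmonic function $\lambda\mapsto\log\Delta(T-\lambda)$. Since the displayed equation shows that this subharmonic function is the same for $T$ and for $\Exp_{\Dc'}(T)$, their distributional Laplacians coincide, giving $\nu_T=\nu_{\Exp_{\Dc'}(T)}$.

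There is no real obstacle in this deduction; the only step that could mislead is forgetting to check that the given family $(q_t)$ remains invariant once $\lambda$ has been subtracted and that $\Exp_{\Dc'}$ commutes with translation by scalars—both being immediate. The genuinely nontrivial content is the equality of determinants quoted from~\cite{DSZ}; once one accepts it, the passage to Brown measures via Brown's defining formula is a purely formal consequence.
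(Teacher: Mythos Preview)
Your proposal is correct and matches the paper's treatment exactly. The paper gives no formal proof of this lemma: it is introduced as ``the statement of Lemma~22 of~\cite{DSZ} and an immediate consequence,'' with the Brown-measure deduction already sketched in the second sentence of the statement itself, and your argument simply fills in that sketch (invariance under translation by $\lambda$, $\Exp_{\Dc'}(T-\lambda)=\Exp_{\Dc'}(T)-\lambda$, then Brown's Laplacian formula).
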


Now we have all the ingredients to prove our main result.
\begin{proof}[Proof of Theorem~\ref{thm:main}]
In Theorem 6 of \cite{DSZ} the decomposition $T=N+Q$ is constructed by considering an increasing, right-continuous
family $(p_t)_{0\le t\le1}$
of Haagerup--Schultz projections, with $p_0=0$ and $p_1=1$,
that are $T$-invariant, letting $\Dc$ be the von Neumann algebra generated
by the set of projections in this family
and taking $N=\Exp_\Dc(T)$.
In particular, each $p_t$ is also $Q$-invariant.

For~(i), we need to show that the Brown measure of $h(T)-h(N)$ is the Dirac mass at $0$.
By Lemma~\ref{lem:hTinv}(iii), each $p_t$ is $h(T)$-invariant.
So by Lemma~\ref{lem:22}, the Brown measures of
$h(T)-h(N)$ and $\Exp_{\Dc'}(h(T)-h(N))$ agree.
Since $h(N)\in\Dc$, we have $\Exp_{\Dc'}(h(N))=h(N)$
and by Lemma~\ref{lem:hTexp}, we have $\Exp_{\Dc'}(h(T))=h(\Exp_{\Dc'}(T))$.
Combining these facts we get
\begin{equation}\label{55}
\nu_{h(T)-h(N)}=\nu_{h(\Exp_{\Dc'}(T))-h(N)}.
\end{equation}
We have
\[
\Exp_{\Dc'}(T)=N+\Exp_{\Dc'}(Q)
\]
and $\Exp_{\Dc'}(Q)$ is s.o.t.-quasinilpotent.
This last statement follows formally from Lemma~\ref{lem:22} and the fact that $Q$ is s.o.t.-quasinilpotent.
However, we should mention that the fact that $\Exp_{\Dc'}(Q)$ is s.o.t.-quasinilpotent was actually proved directly in~\cite{DSZ} as a step in the proof that $Q$ is s.o.t.-quasinilpotent.
In any case, since $N$ and $\Exp_{\Dc'}(T)$ commute and $\Exp_{\Dc'}(Q)$ is s.o.t.-quasinilpotent, by Proposition~\ref{prop:hTNcomm} it follows that
$h(\Exp_{\Dc'}(T))-h(N)$ is s.o.t.-quasinilpotent.
Using \eqref{55}, we get that $h(T)-h(N)$ is s.o.t.-quasinilpotent, as desired.

For~(ii), the projections $p_t$ form a right-continuous family, each of which is invariant under $N^{-1}Q$.
By Lemma~\ref{lem:22}, the Brown measure of $N^{-1}Q$ equals the Brown measure of
\begin{equation}\label{eq:EQ}
\Exp_{\Dc'}(N^{-1}Q)=N^{-1}\Exp_{\Dc'}(Q).
\end{equation}
But since $N^{-1}$ and $\Exp_{\Dc'}(Q)$ commute and since the latter is s.o.t.-quasinilpotent,
by Lemma~\ref{lem:AQ}, their product~\eqref{eq:EQ} is s.o.t.-quasinilpotent.
\end{proof}

\begin{bibdiv}
\begin{biblist}

\bib{Br86}{article}{
  author={Brown, Lawrence G.},
  title={Lidskii's theorem in the type II case},
  conference={
    title={Geometric methods in operator algebras},
    address={Kyoto},
    date={1983}
  },
  book={
    series={Pitman Res. Notes Math. Ser.}, 
    volume={123},
    publisher={Longman Sci. Tech.},
    address={Harlow},
    date={1986}
  },
  pages={1--35}
}

\bib{DSZ}{article}{
  author={Dykema, Ken},
   author={Sukochev, Fedor},
   author={Zanin, Dmitriy},
   title={A decomposition theorem in II$_1$--factors},
   journal={J. reine angew. Math.},
   status={to appear},
  eprint={http://arxiv.org/abs/1302.1114}
}


\bib{HS09}{article}{
  author={Haagerup, Uffe},
  author={Schultz, Hanne},
  title={Invariant subspaces for operators in a general II$_1$--factor},
  journal={Publ. Math. Inst. Hautes \'Etudes Sci.},
  number={109},
  year={2009},
  pages={19-111}
}

\end{biblist}
\end{bibdiv}

\end{document}